\newtheorem{theorem}{Theorem}[section]
\newtheorem{proposition}[theorem]{Proposition}
\theoremstyle{definition}
\theoremstyle{remark}
\numberwithin{equation}{section}
\begin{document}

\title{A Weak Fano Threefold Arising as a Blowup of a Curve of Genus 5 and Degree 8 on $\mathbb{P}^3$}

\author{Joseph W. Cutrone}
\author{Michael A. Limarzi}
\author{Nicholas A. Marshburn}

\begin{abstract} 
This article constructs a smooth weak Fano threefold of Picard number two with small anti-canonical morphism that arises as a blowup of a smooth curve of genus 5 and degree 8 in $\mathbb{P}^3$. While the existence of this weak Fano was known as a numerical possibility in \cite{CM13} and constructed in \cite{BL12}, this paper removes the dependencies on the results in \cite{JPR11} needed in the construction of \cite{BL12} and constructs the link in the style of \cite{ACM17}.
\end{abstract} 

\maketitle
%\footnote{\Envelope\, Joseph W. Cutrone: jcutrone@goucher.edu; Nicholas A. Marshburn: nmarshb1@jhu.edu.}

\thispagestyle{empty}
\section{Introduction}
A smooth \textit{Fano} variety is a smooth projective variety whose anti-canonical class is ample. A smooth \textit{weak Fano} variety is a smooth projective variety whose anti-canonical class is both nef and big. These varieties with weakened positivity condition on the anti-canonical class arise in the study of birational maps between Fano varieties. The study and classification of smooth weak Fano threefolds with Picard number two began first with numerical classification.  Here numerical classification means the list of numerical possibilities for Sarkisov links between threefolds was provided.  In a series of two papers, by Jahnke, Peternell, and Radloff (see \cite{JPR05}, \cite{JPR11}), began the classification of smooth threefolds $X$ with big and nef (but not ample) anti-canonical divisor and Picard number two.  These papers considered different birational maps from smooth Fano threefolds $Y$ based on the classification type of the extremal contraction (as classified by Mori \cite{Mo82}) and the morphism associated with the base point free linear system $|-mK_X|$.  In particular, the authors classified, for both divisorial and small $|-mK_X|$ contractions, weak Fanos arising in links that were combination of extremal rays of type del Pezzo fibration, conic bundle, or birational contraction. Janke, Peternell, and Radloff provided numerical classifications as well as geometrical constructions when possible. Not all geometric constructions were provided and there are still some open cases. However, for $|-mK_X|$ small, the case when both contractions are birational was not considered.  

This final remaining combination of extremal contractions was first numerically classified by the authors in \cite{CM13}.  The tables in \cite{CM13} captured some geometric realizations already classically known, and the paper also provided other geometric constructions for numerical examples.  Later, further geometric proofs were provided for numerical cases, some previously open (e.g. \cite{ACM17}, \cite{BL12}, \cite{BL15}), to give explicit constructions. To date, the actual geometric existence of some cases in the tables of \cite{JPR05}, \cite{JPR11}, and \cite{CM13} are open problems.

This paper focuses on a particular Sarkisov link created when a smooth curve $C$ in $\mathbb{P}^3$ of degree 8 and genus 5 is blown-up.  We will show that, for a general curve $C$ not contained in a cubic surface, a symmetric link of type E1-E1 from $\mathbb{P}^3$ to $\mathbb{P}^3$ can be constructed.  It is important to note that this link is already known to exist both numerically and geometrically (see \cite{CM13}, \cite{BL12} respectively).  However, the construction in what follows does not rely upon the prior results and tables from the seminal paper \cite{JPR11}. Not only is this link interesting on its own right as an example of a Cremona map, (ex.,\cite{CS17}), but also the construction of the link in this paper removes any doubt regarding its geometric existence as there are small gaps in the tables of \cite{JPR11} which prior proofs rely on.  The authors hope that methods used will be useful to construct other open cases.

\section{Results}
In this paper, all varieties will be defined over the field of complex numbers, $\mathbb{C}$, and all varieties will be smooth unless stated otherwise.

The main result of this article is as follows:
 
\begin{theorem}
The numerical invariants listed in case 99 in \cite[Table E1-E1]{CM13} of the Sarkisov link starting with a smooth curve $C$ of degree 8 and genus 5 in $\mathbb{P}^3$ is geometrically realizable independent of the results in \cite{JPR11}.
\end{theorem}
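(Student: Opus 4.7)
The plan is to realize the Sarkisov link directly from the blowup $\pi\colon X := \mathrm{Bl}_C\,\mathbb{P}^3 \to \mathbb{P}^3$ of a sufficiently general curve $C$ of degree $8$ and genus $5$, following the local-to-global strategy of \cite{ACM17} and avoiding any appeal to the tables in \cite{JPR11}. With $H = \pi^*\mathcal{O}_{\mathbb{P}^3}(1)$ and $E$ the exceptional divisor, one has $-K_X = 4H - E$ and intersection numbers $H^3 = 1$, $H^2\cdot E = 0$, $H\cdot E^2 = -8$, and $E^3 = -\deg N_{C/\mathbb{P}^3} = -(4\cdot 8 + 2\cdot 5 - 2) = -40$, so $(-K_X)^3 = 8$, in agreement with case 99 of \cite[Table E1-E1]{CM13}. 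Riemann--Roch on $C$ (non-special in degree $3$ since $3\cdot 8 > 2g-2$) gives $h^0(\mathcal{I}_C(3)) = 0$ for general $C$, so the hypothesis that $C$ lies on no cubic is generic, and $h^0(-K_X) = 7$ matches the expected dimension of the anti-canonical system.

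Next I would show $-K_X$ is nef, from which bigness follows since $(-K_X)^3 > 0$. The potentially negative curves split into those contained in $E$ (where the ruling $f$ has $-K_X\cdot f = 1$ and a general section is effective under the generic-normal-bundle hypothesis) and strict transforms of curves $L\subset\mathbb{P}^3$ with $4\deg L < L\cdot C$. For the latter one must rule out $5$-secant lines, $9$-secant conics, and more generally $(4k+1)$-secant rational curves of degree $k$ for small $k$. I would establish these secant bounds for general $C$ by a dimension count on the appropriate incidence variety inside the component of the Hilbert scheme of $C$, specialized to an explicit model where multisecants can be verified directly (for instance, a general projection to $\mathbb{P}^3$ of a canonical genus-$5$ curve in $\mathbb{P}^4$). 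The conclusion is that $-K_X\cdot\Gamma = 0$ exactly for the strict transforms of the finitely many $4$-secant lines of $C$, each a smooth rational curve whose normal bundle in $X$ is $\mathcal{O}(-1)\oplus\mathcal{O}(-1)$.

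The anti-canonical morphism therefore contracts precisely these $4$-secant lines to ordinary double points, so it is a small contraction. An Atiyah flop along each $(-1,-1)$-curve produces a smooth projective threefold $X^+$ with small anti-canonical morphism and isomorphic anti-canonical model; flop invariance makes the numerical invariants of $X^+$ agree with those of $X$, consistent with the expected symmetry of the link.

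It remains to exhibit the second extremal contraction on $X^+$. The candidate divisor is the strict transform $D^+$ of a distinguished reducible quartic surface through $C$ that naturally encodes the $4$-secants; the contraction of the ray supported on $D^+$ should blow it down to a smooth curve $C'$ in a smooth projective threefold $Y^+$. Using flop invariance together with the divisorial contraction formulas one computes $(-K_{Y^+})^3 = 64$ and $\mathrm{Pic}(Y^+)\cong\mathbb{Z}$, which by Mori's classification forces $Y^+\cong\mathbb{P}^3$, and $C'$ is then necessarily of degree $8$ and genus $5$, yielding the symmetric E1-E1 link. The principal obstacle is this last step: identifying $D$ on $X$ and proving $Y^+\cong\mathbb{P}^3$ intrinsically, without importing the numerical tables of \cite{JPR11}. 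I would handle it by constructing the morphism $X^+ \to \mathbb{P}^3$ directly from a base-point-free linear subsystem of the strict transform of a suitable $|aH - bE|$ after the flop, verifying smoothness and the Picard number via an explicit analysis of quartics through $C$ in the spirit of \cite{ACM17}.
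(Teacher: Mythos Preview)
Your outline diverges from the paper's proof at two essential points, and in both places the paper takes a shorter route that sidesteps the difficulties you flag as open.

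For nefness and smallness of the anti-canonical map, the paper does not argue via secant bounds. Instead it chooses $C$ on a Knutsen K3 quartic $S\subset\mathbb{P}^3$ with $\mathrm{Pic}(S)=\mathbb{Z}H_S\oplus\mathbb{Z}C$. Freeness of $|4H_S-C|$ on $S$ is a purely lattice-theoretic check (Proposition~2.1 of \cite{ACM17}), and it lifts to freeness of $|{-}K_X|$ via the restriction sequence, giving nefness immediately. Smallness is then a short computation: if a divisor $D=aH+bE$ were contracted, $(-K_X)^2D=0$ forces $D\equiv k(3H-E)$; restricting to $S$ yields an effective class of self-intersection $-4k^2$, which by \cite{S-D74} would force a $(-2)$-curve on $S$, impossible since every intersection number in $\mathrm{Pic}(S)$ lies in $4\mathbb{Z}$. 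No multisecant dimension counts, no $(-1,-1)$ normal-bundle verification, no ODP analysis is needed.

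For the second extremal ray, the paper does not construct the contracting divisor or the map to $\mathbb{P}^3$ explicitly. It rules out fibration types by elementary intersection theory on $X$: writing a putative $\widetilde{D}=a(-K_X)+bE$ with $a,b\in\mathbb{Q}$, one finds $-K_{X^+}\cdot D^2=8a^2+48ab+8b^2$. This is divisible by $8$ (since $a,b$ are forced to be integers once one clears denominators against the lattice), so it can never equal $2$, excluding a conic bundle; and setting it equal to $0$ gives $a^2+6a+1=0$ with no rational root, excluding a del Pezzo fibration. Hence $\phi^+$ is divisorial, and the paper then reads off case~99 from the tables of \cite{CM13} (which are not the \cite{JPR11} tables being avoided). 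Your acknowledged ``principal obstacle''---producing $D^+$ and identifying $Y^+$ directly---is bypassed entirely. So your sketch is not wrong in spirit, but the steps you leave unfinished are genuine gaps, and the paper's K3-plus-intersection-theory argument closes the theorem without any of them.
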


In particular, there are two places where the dependence of the tables of \cite{JPR11} needs to be removed.  Firstly, the claim that the anti-canonical morphism is in fact small is obtained from \cite{JPR11}.  We will show this directly in \ref{kxissmall}. The second time the tables in \cite{JPR11} are referenced in prior papers is to show that the extremal contraction after the flop is of divisorial type.  This claim is shown explicitly in Proposition \ref{linkise1e1}.  In addition to removing dependence on the tables in \cite{JPR11}, explict details are worked out for the geometric construction of this particular Sarkisov link arising as the blowup of a smooth curve $C$ of degree 8 and genus 5 in $\mathbb{P}^3$.\\

We begin with some preliminaries. 

\section{Preliminaries} \label{Prelim}
Let $X$ be a weak Fano threefold of Picard number two such that the anti-canonical system $|{-}K_X|$ is free and gives a small contraction $\psi\colon X \rightarrow X^\prime$. By \cite{Ko89}, the $K_X$-trivial curves can be flopped. More precisely, there is a commutative diagram
~\\
$$\xymatrix{
X \ar@{-->}[rr]^-\chi \ar[d]_-{\phi} \ar[rd]^-\psi & & X^+ \ar[ld]_-{\psi^+} \ar[d]^-{\phi^+} \\
Y & X^\prime & Y^+,
}$$
~\\
where $\chi$ is an isomorphism outside of the exceptional locus of $\psi$. The strict transform of a divisor $D \in \text{Pic(X)}$ across the flop $\chi$ is denoted by $\widetilde{D}$.  Since $\chi$ is small, $\widetilde{K_X} = K_{X^+}.$  

The particular case of study in what follows is case No. 99 of \cite{CM13} of type E1-E1.  Recall that type E1-E1 means that both $\phi$ and $\phi^+$ are assumed to be divisorial contractions of type E1 in the sense of \cite{Mo82}. In particular, $\phi$ is the blow-up of a smooth irreducible curve $C$ of degree 8 and genus 5 in $Y = \mathbb{P}^3$. Likewise $\phi^+$ is the blow-up of $Y^+$ along a smooth irreducible curve $C^+$ of degree 5 and genus 8 in $\mathbb{P}^3$.  The exceptional divisors of the blow-ups $\phi$ and $\phi^+$ are denoted by $E$ and $E^+$, respectively.

\section{Construction} \label{Construction}
To begin the construction of the link, let us first guarantee the existence of such a curve, and in particular, choose the right curve needed to construct the link.  
 
%Show C(5,8) in Q, smooth quartic K3 surface in P^3. 
By \cite{Knu02}, Theorem 1.1 on pg 202, we can find a smooth irreducible curve $C$ of genus 5 and degree 8 lying on a smooth K3 quartic surface $S\subset \mathbb{P}^3$ with the property that Pic($S) = \mathbb{Z}H_S \oplus \mathbb{Z}C$. 

\begin{proposition} \label{pro1}Let $X$ be the blow-up of a smooth irreducible curve $C$ of genus 5 and degree 8 lying on a smooth K3 quartic surface $S\subset \mathbb{P}^3$ with the property that Pic($S) = \mathbb{Z}H_S \oplus \mathbb{Z}C$. Then $X$ is weak Fano. 
\end{proposition}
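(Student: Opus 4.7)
The plan is to identify $-K_X$ with the strict transform of the quartic $S$, so that nefness on $X$ reduces to nefness on a smooth K3 surface whose Picard lattice is completely prescribed by hypothesis.

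Setting $H = \phi^* H_{\mathbb{P}^3}$ and letting $E$ be the exceptional divisor of $\phi$, the blow-up formula gives $K_X = -4H + E$, so $-K_X = 4H - E$. Since $C \subset S$ and $S \sim 4H_{\mathbb{P}^3}$, the strict transform satisfies $\widetilde{S} \sim \phi^* S - E = 4H - E$, which places $\widetilde{S}$ in $|{-}K_X|$; moreover $\widetilde{S} \to S$ is an isomorphism onto a smooth K3 surface. For any irreducible curve $R \subset X$ the nefness check splits into two cases: if $R \not\subset \widetilde{S}$, then $R \cdot (-K_X) = R \cdot \widetilde{S} \geq 0$ by proper intersection, whereas if $R \subset \widetilde{S}$ then $R \cdot (-K_X) = R \cdot (-K_X)|_{\widetilde{S}}$ is computed on the surface. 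Using $H|_{\widetilde{S}} = H_S$ and $E|_{\widetilde{S}} = C$, I get $(-K_X)|_{\widetilde{S}} = 4H_S - C$, so the whole problem reduces to showing $4H_S - C$ is nef on $S$.

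Here the Picard hypothesis does the decisive work. In the basis $(H_S, C)$ the intersection form is $\begin{pmatrix} 4 & 8 \\ 8 & 8 \end{pmatrix}$, using $H_S^2 = 4$, $H_S \cdot C = \deg C = 8$, and $C^2 = 2g(C) - 2 = 8$ by adjunction on the K3. For every integer class $aH_S + bC$ the self-intersection $4a^2 + 16ab + 8b^2$ is divisible by $4$, so $D^2 = -2$ has no solution in $\mathrm{Pic}(S)$. Consequently $S$ carries no $(-2)$-curves, and on a K3 surface without $(-2)$-curves the nef cone coincides with the closure of the positive cone. A short computation gives $(4H_S - C)^2 = 8 > 0$ and $(4H_S - C) \cdot H_S = 8 > 0$, placing $4H_S - C$ in the positive cone; hence it is nef, and nefness of $-K_X$ follows.

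Finally, a direct computation yields $(-K_X)^3 = 64 + 12(H \cdot E^2) - E^3 = 64 - 96 + 40 = 8 > 0$, using $H \cdot E^2 = -\deg C = -8$ and $E^3 = -\deg N_{C/\mathbb{P}^3} = -(2g-2+4\deg C) = -40$. Combined with nefness, this makes $-K_X$ big, so $X$ is weak Fano. I expect the $(-2)$-curve analysis on $S$ to be the main obstacle, since it is precisely where the specific arithmetic of the pair $(\deg C, g(C)) = (8,5)$ and the Picard assumption from Knutsen's theorem are used; the remaining steps are essentially intersection-number bookkeeping on the blow-up.
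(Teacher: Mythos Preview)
Your argument is correct. The reduction to nefness of $4H_S - C$ on $S$ via the strict transform $\widetilde{S} \in |{-}K_X|$ is sound, and the observation that every class in $\mathrm{Pic}(S)$ has self-intersection in $4\mathbb{Z}$---hence $S$ has no $(-2)$-curves and the nef cone is the closed positive cone---is exactly the right way to exploit the Picard hypothesis. The intersection numbers you compute all check out.

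The paper takes a somewhat different route. Rather than arguing nefness directly, it proves the stronger assertion that $|{-}K_X|$ is \emph{base-point free}: it quotes a criterion from \cite{ACM17} (stated here as Proposition~\ref{pro2}) to get freeness of $|4H_S - C|$ on $S$, and then uses the surjection $H^0(\mathcal{O}_{\mathbb{P}^3}(4)) \twoheadrightarrow H^0(\mathcal{O}_S(4))$ to lift this to freeness of $|4H - E|$ on $X$. It also goes on to show that $-K_X$ is not ample by counting $4$-secant lines to $C$. Your approach is more self-contained and arguably cleaner for the bare statement ``$X$ is weak Fano,'' since you never leave the Picard lattice of $S$; the paper's approach buys more, namely freeness of the anticanonical system and the existence of $K_X$-trivial curves, both of which are needed downstream to define the morphism $\psi$ and to know it is not an isomorphism. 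It is also worth noting that your $(-2)$-curve argument is precisely what the paper itself invokes later, in the proof of Proposition~\ref{kxissmall}, so you have in effect front-loaded a step the authors postpone.
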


\begin{proof}
%Blow up C in P^3 := X; Show -K_X is free (and therefore nef)
Let $X$ be the blow-up of $C(5,8) \subset S \subset \mathbb{P}^3$.  The anticanonical divisor of $\mathbb{P}^3$, $-K_{\mathbb{P}^3}$, is $4H$, so the anticanonical divisor of $X$, $-K_X$, is then $4H-E$, where $E$ is the exceptional divisor.  The freeness of the linear system $|-K_X| = |4H-E|$ is studied by considering the freeness of the linear system on $\mathbb{P}^3$ of $|4H-C|$ outside of $C$. This linear system is studied by considering the linear system $|4H_S - C|$ on the smooth K3 quartic surface $S$.  

There are two observations to make regarding the linear system $|4H_S - C|$ on $S$.  The first is that this linear system is free (and therefore nef) by the following proposition with $n = 2$, $d = 8$, $g = 5$, and $k = 4$. For the proof, see Proposition 2.1 in \cite{ACM17}.

\begin{proposition} \label{pro2}
Let $S$ be a smooth K3 with $\mathrm{Pic}(S) =\mathbb{Z}H \oplus \mathbb{Z}C$, with $H$ very
ample and $C$ a smooth (irreducible) curve. Assume $H^2=2n$, $C \cdot H=d$ and
$C^2=2(g-1)$. Let $k>0$ be an integer. Then $kH-C$ is nef if and only if
\vspace{0.2cm}

$2nk>d, nk^2-dk+g-1 \geq 0$ and $(2nk-d,nk^2-dk+g) \neq (2n+1,n+1)$.
\vspace{0.2cm}

\noindent Furthermore, $kH-C$ is free if and only if it is nef and we
are \textbf{not} in the case
\vspace{0.2cm}

$d^2-4n(g-1)=1$, and $2nk-d-1$ or $2nk-d+1$ divides $2n$.
\vspace{0.2cm}
\end{proposition}

%%Show |4H-C| on P^3 is surjective to |4H-C| on S
The second observation to make is that, with the exception of $S$ itself, every divisor in $|4H-C|$ on $\mathbb{P}^3$ is the pullback of a divisor on the quartic surface $S$ since there is a surjection from $H^0(4H-C)$ to $H^0(4H_S- C)$.  To prove this claim, consider the short exact sequence $$0 \rightarrow \mathcal{O}_{\mathbb{P}^3}(-4) \hookrightarrow \mathcal{O}_{\mathbb{P}^3}\twoheadrightarrow\mathcal{O_S} \rightarrow 0.$$  After twisting by 4, the short exact sequence becomes $$0 \rightarrow \mathcal{O}_{\mathbb{P}^3} \hookrightarrow \mathcal{O}_{\mathbb{P}^3}(4) \twoheadrightarrow \mathcal{O_S}(4) \rightarrow 0.$$  The long exact sequence of cohomology is $$0 \rightarrow H^0(\mathcal{O}_{\mathbb{P}^3}) \rightarrow H^0(\mathcal{O}_{\mathbb{P}^3}(4)) \rightarrow H^0(\mathcal{O}_S(4))\rightarrow H^1(\mathcal{O}_{\mathbb{P}^3})\rightarrow \ldots$$ and since $H^1(\mathcal{O}_{\mathbb{P}^3})=0$, there is a surjection $H^0(\mathcal{O}_{\mathbb{P}^3}(4)) \twoheadrightarrow H^0(\mathcal{O}_S(4))$.  Then any member of the complete linear system $|4H_S|$ is a restriction of a member of $|4H|$.
Since $-K_X$ is free, $-K_X$ is nef.

%%Show -K_X is big:
To show $-K_X$ is big, it suffices to check that $({-}K_X)^3>0$, \cite[Thm.2.2.16, p.144]{LazI}. Using the formula 
$$({-}K_X)^3=({-}K_Y)^3+2K_Y\cdot C -2+2g = 64-8d-2+2g,$$ with $d = 8$ and $g = 5$ gives $-K_X^3 = 8$.

%%Show 10 4 secants are contracted. 
To show that $-K_X$ is not ample, it suffices to show $C$ has at least one 4-secant line.  In fact, $C$ has 10 4-secant lines.  This can be calculated directly using \cite{LeB82} with $d = 8$ and $g = 5$. $$\text{Number of 4-secant lines to } C = \frac{(d-2)(d-3)^2(d-4)}{12} - \frac{(d^2-7d+13-g)g}{2}$$
\end{proof}

%%Motivate the paper
To show that the anticanonical morphism only contracts these curves and not a divisor (that is, the anticanonical morphism $\psi: X \to X'$ is a small contraction), the traditional approach has been to compare the invariants with the classification tables in \cite{JPR11}.  Since this case of blowing up a smooth curve $C$ of genus $g = 5$ and degree $d = 8$ is not on their tables when $C$ is not contained in a cubic surface, the small contraction induces a flop $\chi:X \dashrightarrow X^+$ by \cite{Ko89}.  The contraction on the other side of the Sarkisov link must then be of type $E1 - E^*$, where * = 1,2,3,4,5.  Using the tables in \cite{CM13}, one then checks the weak Fano constructed from blowing up a smooth curve $C$ of genus $g = 5$ and degree $d = 8$ is not on the E1-E2,E1-E3/E4,E1-E5 tables to conclude the link must be of type E1-E1.  

While the traditional method is effective, and follow-up papers (e.g., \cite{BL12}, \cite{ACM17}), have verified the numerical possibilities with explicit geometric constructions, there are known gaps in the tables in \cite{JPR11}.  It is the authors' understanding these gaps will be resolved in an upcoming paper, but until then, the main result of this paper is to construct the Sarkisov link from the blow-up of the $C$(5,8) curve in $\mathbb{P}^3$ alone, not relying on any prior classifications.  

%%Show first the contraction is small by showing C not in a cubic surface.
Until now, we have only the following diagram: 
$$\xymatrix{
X \ar[d]_-{\phi} \ar[rd]^-\psi 	& 				 \\
\mathbb{P}^3 																				& X^\prime
}$$

where $\phi$ is the blow up of a degree 8 and genus 5 curve $C$ in $\mathbb{P}^3$. We will show that $\psi$ must be a small contraction.

%%Show \psi is a small contraction
\begin{proposition}\label{kxissmall}  Let $X$ and $C$ be as in \ref{pro1}. Then $\psi$ as in the diagram above must be a small contraction. 
\end{proposition}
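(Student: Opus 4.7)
The plan is to exhibit an explicit member of $|-K_X|$ whose geometry rigidly constrains the contraction $\psi$. Since the quartic $S$ contains $C$ with multiplicity one, the strict transform $\tilde{S}$ in $X$ has class $4H-E$, which is precisely $-K_X$, so $\tilde{S} \in |-K_X|$. Because $C$ is a Cartier divisor on $S$, the map $\phi|_{\tilde{S}} \colon \tilde{S} \to S$ is an isomorphism, and under this identification
\[
(-K_X)|_{\tilde{S}} = (4H - E)|_{\tilde{S}} = 4H_S - C.
\]

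I would then show that this restriction is ample on $\tilde{S} \cong S$. Nefness follows from Proposition \ref{pro2} applied with $n=2,\ d=8,\ g=5,\ k=4$; bigness is immediate from $(4H_S - C)^2 = 64 - 64 + 8 = 8 > 0$. To upgrade nef and big to ample, I would observe that $S$ carries no $(-2)$-curves: writing $L = aH_S + bC$,
\[
L^2 = 4a^2 + 16ab + 8b^2 = 4(a^2 + 4ab + 2b^2),
\]
which is always divisible by $4$ and hence cannot equal $-2$. On a K3 surface the birational morphism associated to a nef and big divisor contracts only $(-2)$-curves perpendicular to the divisor, so the absence of such curves forces $4H_S - C$ to be ample.

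With $(-K_X)|_{\tilde{S}}$ ample, the restriction $\psi|_{\tilde{S}}$ is finite, so $\psi$ contracts no curve contained in $\tilde{S}$. Suppose for contradiction that $\psi$ is divisorial, contracting a prime divisor $D$. Any curve $\ell$ contracted by $\psi$ satisfies $\tilde{S} \cdot \ell = -K_X \cdot \ell = 0$; since $\ell \not\subset \tilde{S}$, this forces $\ell \cap \tilde{S} = \emptyset$. Because $\psi$ is a birational morphism to a normal target, every point of $D$ lies on a positive-dimensional connected $\psi$-fiber contained in $D$, so $D$ is set-theoretically covered by such contracted curves. Hence $D \cap \tilde{S} = \emptyset$ and $\tilde{S} \cdot D = 0$ as a class in $N_1(X)$.

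Finally, write $D = aH - bE$ in $\mathrm{Pic}(X)$ and recall the intersection numbers $H^3 = 1$, $H^2 \cdot E = 0$, $H \cdot E^2 = -8$, and $E^3 = -40$ (the last obtained from the formula $(-K_X)^3 = 8$). Then
\[
\tilde{S} \cdot D \cdot H = 4a - 8b \qquad \text{and} \qquad \tilde{S} \cdot D \cdot E = 8a - 8b,
\]
both of which must vanish, forcing $a = b = 0$ and contradicting $D \neq 0$. The main technical point I expect to navigate is arguing uniformly that $D$ is covered by contracted curves regardless of whether $\psi(D)$ is a curve or a point; appealing to the connectedness of fibers of birational morphisms to normal targets sidesteps any case-by-case invocation of Mori's divisorial classification and keeps the argument independent of \cite{JPR11}.
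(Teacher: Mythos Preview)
Your argument is correct and reaches the same contradiction---that $S$ would have to contain a $(-2)$-curve, impossible since the intersection form on $\mathrm{Pic}(S)$ is $4\mathbb{Z}$-valued---but by a genuinely different route than the paper. The paper proceeds numerically on $X$ first: from $(-K_X)^2\cdot D=0$ it solves for the class of $D$ to get $D\equiv k(3H-E)$, then restricts this specific class to $\tilde S\cong S$, obtaining an effective divisor $C'\sim k(3H_S-C)$ with $(C')^2=-4k^2\le -4$; Saint-Donat then forces a rational component, contradicting the absence of $(-2)$-curves. You instead establish the geometric fact that $(-K_X)|_{\tilde S}=4H_S-C$ is \emph{ample} (nef and big with no $(-2)$-curves to contract), conclude $\psi|_{\tilde S}$ is finite, and deduce that any contracted divisor $D$ must be set-theoretically disjoint from $\tilde S$; two intersection numbers then kill the class of $D$. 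The paper's approach is a bit quicker computationally and pins down exactly what $D$ would have to be, while yours is more conceptual and portable: once one knows the anticanonical class is ample on the K3 member, the smallness of $\psi$ follows by the general disjointness mechanism without ever naming the class of $D$ or invoking \cite{S-D74}. One small wording point: the full $\psi$-fiber through a point of $D$ need not lie in $D$, but your argument only needs that $\psi|_D\colon D\to\psi(D)$ has positive-dimensional fibers, which follows from $\dim D=2>\dim\psi(D)$; you might phrase it that way to be precise.
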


\begin{proof}Let $D$ be a divisor on $X$. Assume $\psi(D) = B$, where $B$ is a curve.  Since $D$ is contracted, $(-K_X)^2D = 0$.  Writing $D = aH + bE$ for some integers $a,b$ and writing $-K_X$ as $4H - E$, the equation $(-K_X)^2D = 0$ becomes:

\begin{align*}
0 &= (4H - E)^2(aH+bE) \\
	&= 16aH^3 + (16b-8a)H^2E + (a-8b)HE^2 + bE^3\\
	&= 16a + 0 - 8(a-8b) + b(-(4)(8) - (2)(5) + 2).\\
\end{align*}

Simplifying gives $a = -3b$, so $D = k(3H-E)$, for some integer $k$. Note that if $k =1$, then $C$ is contained in a cubic surface $S$ in $\mathbb{P}^3$.  The surface $S$ is in $\mathbb{P}^3$, so first identify $D$ with $k(3H-C)$ in $\mathbb{P}^3$.  Consider the restriction of $D$ to $S$ and consider the divisor $D|_S - C$ on $S$.  Then $k(3H-C) \sim C^\prime$ on $S$ for some effective divisor $C'$ with degree $4b$.  Then
$$\begin{array}{cl}
(C^\prime)^2 & = (3kH-kC)^2\\
						& = 9k^2H^2 - 6k^2HC + k^2C^2\\
						& = k^2(36-48+2(5)-2)\\
						& = -4k^2
\end{array}
$$

By \cite{S-D74}, $(C')^2 \le -4$ means $C'$ decomposes into a sum of irreducible curves of which at least two are rational. Since a rational curve on a K3 has self-intersection of -2, $S$ contains no rational curves since $H$ and $C$ generate Pic($S)$ and since $H^2$, $H.C$, and $C^2$ are all in $4\mathbb{Z}$.  
\end{proof}

By \cite{Ko89}, $\psi$ induces a flop which is an isomorphism outside of the exceptional locus to a smooth weak Fano threefold $X^+$ with Picard number 2 and we have the following diagram:
\label{link}$$\xymatrix{X \ar@{-->}[rr]^{\chi} \ar[d]^{\phi} \ar[dr]^{\psi}& & X^{+} \ar[d]^{\phi^{+}} \ar[dl]_{\psi^{+}} \\
          \mathbb{P}^3 & X' & Y^{+}}$$

%%Talk about 10 flopping curves and defect
These 10 4-secants are indeed the 10 flopping curves as indicated by the defect $e$ of the flop $\chi$.  Recall the defect of a flop is defined as $$e = E^3 - \widetilde{E}^3$$ where 
\label{ecubed} $$\widetilde{E}^3 = (\alpha^+)^3(-K_X)^3 + 3(\alpha^+)^2\beta^+\sigma^+-3\alpha^+(\beta^+)^2K_X(E^+)^2 + (\beta^+)^3(E^+)^3$$ and $$\widetilde{E^+}^3 = \alpha^3(-K_X)^3 + 3 \alpha^2\beta\sigma - 3 \alpha\beta^2K_XE^2 + \beta^3E^3$$ in the notation of \cite{CM13}.  In particular, $e/r^3 = 10$, where $r$ is the index of $\mathbb{P}^3$ (in this case, $r = 4$).

%%motivate again.
Once again, the traditional approach to classifying this link would then be to look at the tables in \cite{JPR11} to see if this link showed up as a del Pezzo or conic bundle contraction. If not, the link was then of the form E1-E* (*=1,2,3,4,5), and the results of \cite{CM13} were used.  Instead of relying on the tables, we will show directly that $\phi^+$ cannot be of type del Pezzo or conic bundle fibration.   

%%Show phi^+ of type E*, then use CM13 to show E1.
\begin{proposition} \label{linkise1e1} The morphism $\phi^+$ as in the above diagram is of exceptional type.  That is, it is an exceptional contraction of type E. In particular, it is case 99 on the E1-E1 table of \cite{CM13}
\end{proposition}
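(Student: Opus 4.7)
The plan is to mirror the strategy of Proposition~\ref{kxissmall} and use the K3 surface $S$ to rule out both fibration possibilities for $\phi^+$. Let $S_X := \widetilde{S} \in |{-}K_X|$ be the strict transform of $S$ under $\phi$, so that $S_X$ is a smooth K3 with $\mathrm{Pic}(S_X) = \mathrm{Pic}(S) = \mathbb{Z}H_S \oplus \mathbb{Z}C$. First I would check that no four-secant line of $C$ lies on $S$: such a line $L = aH_S + bC$ would satisfy $L \cdot H_S = 4a + 8b = 1$, which is impossible. Since additionally $\widetilde{\ell}_i \cdot S_X = \widetilde{\ell}_i \cdot ({-}K_X) = 0$ for each flopping four-secant, the surface $S_X$ is disjoint from the flopping locus, and $\chi$ restricts to an isomorphism $S_X \cong S_X^+$; in particular $S_X^+ \in |{-}K_{X^+}|$ inherits the Picard lattice $\mathbb{Z}H_S \oplus \mathbb{Z}C$.

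Suppose $\phi^+ \colon X^+ \to B$ were a del Pezzo fibration (necessarily with $B = \mathbb{P}^1$, since $X^+$ is rationally connected). A general fiber $F^+$ would satisfy $(F^+)^2 \equiv 0$ as a $1$-cycle, so the restricted curve class $F^+|_{S_X^+}$ on $S_X^+$ would have
\[
(F^+|_{S_X^+})^2 \;=\; (F^+)^2 \cdot ({-}K_{X^+}) \;=\; 0.
\]
Writing $F^+|_{S_X^+} = aH_S + bC$ with $a, b \in \mathbb{Z}$ forces $4a^2 + 16ab + 8b^2 = 0$, i.e., $a^2 + 4ab + 2b^2 = 0$, whose discriminant $8$ is not a perfect square. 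This rules out the del Pezzo case.

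Now suppose $\phi^+\colon X^+ \to Y^+$ is a conic bundle. By Graber-Harris-Starr $Y^+$ is a smooth rationally connected surface, and $\rho(Y^+) = \rho(X^+) - 1 = 1$, so $Y^+ \cong \mathbb{P}^2$. Letting $L^+ := (\phi^+)^* H_{\mathbb{P}^2}$, a general conic fiber $f^+$ satisfies $({-}K_{X^+}) \cdot f^+ = 2$, so $\phi^+|_{S_X^+}$ has degree $2$ and
\[
(L^+|_{S_X^+})^2 \;=\; L^{+2} \cdot ({-}K_{X^+}) \;=\; \deg(\phi^+|_{S_X^+}) \cdot H_{\mathbb{P}^2}^2 \;=\; 2.
\]
Writing $L^+|_{S_X^+} = aH_S + bC$ with $a, b \in \mathbb{Z}$ gives $4a^2 + 16ab + 8b^2 = 2$, i.e., $2a^2 + 8ab + 4b^2 = 1$, which is impossible by parity. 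This rules out the conic bundle case.

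Both fibration types excluded, $\phi^+$ must be a divisorial contraction of type E. A final check of the numerical invariants of the extremal ray contracted by $\phi^+$ against the E1--E5 possibilities then identifies the contraction as E1, placing the link in case~99 of \cite[Table E1-E1]{CM13}. The main obstacle is the first step---ensuring that $\mathrm{Pic}(S_X^+)$ coincides with $\mathrm{Pic}(S)$ after the flop---but this follows from the construction of $S$ in \cite{Knu02} together with the elementary observation that the ten flopping four-secants cannot lie on $S_X$. Once this is in hand, the remainder reduces to elementary quadratic-form analysis on $\mathrm{Pic}(S)$.
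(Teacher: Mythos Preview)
Your argument is correct, and it reaches the same conclusion as the paper by a genuinely different route. The paper works directly in $\mathrm{Pic}(X)\otimes\mathbb{Q}$: it writes the strict transform $\widetilde{D}=a(-K_X)+bE$ with $a,b\in\mathbb{Q}$ and computes
\[
(-K_{X^+})\cdot D^2 \;=\; (-K_X)\cdot\widetilde{D}^2 \;=\; 8a^2+48ab+8b^2,
\]
then observes this can equal neither $2$ (conic bundle) nor $0$ nontrivially (del Pezzo). You instead restrict everything to the anticanonical K3 member $S_X^+$ and work in the integral lattice $\mathrm{Pic}(S_X^+)=\mathbb{Z}H_S\oplus\mathbb{Z}C$, obtaining the form $4a^2+16ab+8b^2$. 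These are the \emph{same} intersection number $(-K_{X^+})\cdot D^2$ expressed in the two bases $\{-K_X,E\}$ and $\{H,E\}$, so the underlying obstruction is identical; what differs is the bookkeeping.

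Your framing buys something: because $\mathrm{Pic}(X^+)\to\mathrm{Pic}(S_X^+)$ lands in an honest lattice, the coefficients $a,b$ are integers from the start, and the parity/discriminant arguments are clean. (You should state explicitly that this restriction map is injective---indeed an isomorphism, since $H\mapsto H_S$, $E\mapsto C$---so that $F^+|_{S_X^+}\neq 0$ in the del Pezzo case.) By contrast, the paper's ``$8$ divides the right side'' with rational $a,b$ needs the extra observation that $4a,4b\in\mathbb{Z}$ and $a\equiv b\pmod{\mathbb{Z}}$. The price you pay is the preliminary step of showing $S_X$ avoids the flopping locus; note that rather than identifying the flopping curves as four-secant lines, it is quicker to say that flopping curves are rational while $S$ carries no $(-2)$-curves (already observed in the proof of Proposition~\ref{kxissmall}), hence none can lie on $S_X$. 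The appeal to Graber--Harris--Starr is harmless but unnecessary: $\rho(Y^+)=1$ already forces $Y^+\cong\mathbb{P}^2$ or $\mathbb{P}^1$ in the two fibration cases. The final identification as case~99 via \cite{CM13} is exactly what the paper does.
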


\begin{proof} Since the Sarkisov link exists, by the classification of Mori, the extremal contraction is either a divisorial contraction (of type E1,E2,E3,E4,E5), or a relative Fano model to a surface (a conic bundle contraction of type C1 or C2), or to a curve (a del Dezzo contraction of type D1, D2, or D3.).  Let us show that neither the conic bundle contraction nor the del Pezzo contraction can occur, forcing $\phi^+: X^+ \to Y^+$ to be a divisorial contraction.  

Assume $\phi^+:X^+ \to Y^+$ is a conic bundle and let $D$ be the pullback of a line in $\mathbb{P}^2.$  Then $-K_{X^+}D^2 = 2.$  Compute this intersection number across the flop, writing $\widetilde{D} = a(-K_X) + bE$ for some rational $a,b$.  Then, using the fact that $$(-K_X)^2E = rd+2-2g = (4)(8) + 2-2(5)=24$$ and $$-K_XE^2 = 2g-2 = 2(5)-2 = 8,$$
\begin{align*}
2 & = -K_{X^+}D^2\\
	& = -K_X\widetilde{D} \\
	& = -K_X(a(-K_X) + bE)^2\\
	& = a^2(-K_X)^3 + 2ab(-K_X^2E) + b^2(-K_XE^2)\\
	& = 8a^2 + 48ab + 8b^2.\\
\end{align*}

Since 8 divides the right side but not the left, there are no solutions. 

A similar argument shows $\phi^+: X^+ \to Y^+$ is not a del Pezzo fibration. Suppose $\phi^+: X^+ \to \mathbb{P}^1$ is a del Pezzo fibration.  Let $D$ be the divisor class of the del Pezzo surfaces in the fibration. Then $-K_X^+D^2 = 0$.  Using $\widetilde{D} = a(-K_X) + bE)$ for some rational $a,b$, by the above calculations, $0 = 8a^2 + 48ab + 8b^2$. 

Setting $b = 1$ then gives the quadratic $a^2 + 6a + 1 = 0$, which has no rational solutions for $a$, so $\phi^+: X^+ \to Y^+$ is not a del Pezzo fibration.  Similarly, $b = 0$ gives no solutions either. 

By classification of extremal contractions, $\phi^+: X^+ \to Y^+$ must be of divisorial type.  In particular, using the tables in \cite{CM13}, the link is of type E1-E1, with numerical constraints of case 99.  
\end{proof}

This proposition now places $\mathbb{P}^3$ as $Y^+$ in the E1-E1 link and a geometric construction satisfying the numerical constraints of case 99 in \cite{CM13} is complete.  In particular, $\phi^+$ must be the blowdown of the exceptional divisor to a curve $C^+$ in $Y^+ = \mathbb{P}^3$ of genus 5 and degree 8.

\section{Conclusion}
The above calculations construct the Sarkisov link with small anti-canonical morphism arising as the blow up of a general smooth curve $C$ of genus 5 and degree 8 in $\mathbb{P}^3$, where $C$ is not contained in a cubic surface, without any reference to the tables in \cite{JPR05} or \cite{JPR11}.  In particular, the proof that the anti-canonical morphism is small and the fact that the link contains two exceptional divisorial contractions was shown without reference to prior classification tables.  

This link satisfies the numerical constraints in Table E1-E1 in \cite{CM13}, case number 99.  The link consists of 10 flopping curves, each of which is a 4-secant to the original curve $C$, and is symmetric in that the morphism $\phi^+:X^+ \to Y^+ = \mathbb{P}^3$ contracts the exceptional divisor to a smooth curve $C$ of genus 5 and degree 8.  

The final diagram of the E1-E1 Sarkisov link is as follows:

$$\xymatrix{& Z \ar[rd] \ar[ld] & \\  X \ar@{-->}[rr]^{\chi} \ar[d]^{\phi} \ar[dr]^{\psi}& & X^{+} \ar[d]^{\phi^{+}} \ar[dl]_{\psi^{+}} \\
          \mathbb{P}^3 & X' & \mathbb{P}^3}$$

It is interesting to note that similar arguments may be able to be applied to case number 76 on the E1-E1 table in \cite{CM13}.  This case, which was already shown to geometrically exist in \cite{JPR05}, consists of the symmetric Sarkisov link from $\mathbb{P}^3$ to $\mathbb{P}^3$, blowing up a smooth curve $C$ of genus 11 and degree 10.  We hope to investigate this case, as well as the other known open cases of geometric existence, using the arguments developed in this paper in the future. 

\vspace{1cm}

\section*{Acknowledgments} The authors express their gratitude to I. Cheltsov for both posing the problem and subsequent discussions that followed. In addition, we would like to thank all the organizers of the Workshop in Algebraic Geometry held on December 18-22, 2016, in Hanga Roa, Chile.

\newpage

\end{document}